\documentclass[12pt]{article}
\usepackage{amsfonts,mathrsfs,amssymb,amsthm,mathptm}
\usepackage{amsmath,amscd}
\usepackage{mathptm,pslatex}
\usepackage{multirow}
\usepackage{color}
\usepackage[dvips]{graphicx}
\usepackage[all]{xy}
\usepackage{graphicx}
\usepackage{fancyhdr}
\usepackage{url}
\usepackage{cite}
\usepackage{exscale}
\usepackage{relsize}
\usepackage{bbm}
\oddsidemargin -10pt \voffset=-2.5cm \addtolength{\textheight}{5cm}
\addtolength{\textwidth}{4cm}



\theoremstyle{plain}
\newtheorem{theorem}{Theorem}[section]
\newtheorem{prop}[theorem]{Proposition}
\newtheorem{lem}[theorem]{Lemma}

\numberwithin{equation}{section}
\allowdisplaybreaks

\begin{document}

\begin{center}
{\bf\Large On finite totally $k$-closed groups }
\end{center}
\vskip 3mm
\begin{center}
{ Jiawei He$^{a,*}$, Xiaogang Li $^{b}$}
\end{center}

\vskip 3mm

\begin{abstract}
Let $G$ be a finite group acting faithfully on a finite set $\Omega$. For a positive integer $k$, $G$ acts naturally on the Catesian product $\Omega^k := \Omega \times ...\times \Omega$. In this paper, we prove that finite nilpotent group $G$ with $2\nmid |G|$ is a totally $k$-closed group if and only if $G$ is abelian with $n(G)\leq k-1$ or cyclic, where $n(G)$ is the number of invariant factors in the invariant factor decomposition of $G$.

\end{abstract}
\vskip 3mm

{Keywords: $k$-closed; totally $k$-closed group;  permutation groups.}

\renewcommand{\thefootnote}{\empty}
\footnotetext{School of Mathematics and Information Science, Nanchang Hangkong University, Nanchang, China}
\footnotetext{$^*$corresponding author, School of Mathematics and Information Science, Nanchang Hangkong University, Nanchang, China}
\footnotetext{ Email:  hjwywh@mails.ccnu.edu.cn }

\section{Introduction}
For a positive integer $k$, let $G$ be a permutation group on a finite set $\Omega$. $G$ is said to be $k$-$closed$ if $G$ is the largest subgroup of $\mathrm{Sym}(\Omega)$ which leaves invariant each of the $G$-orbits in the induced action on $\Omega \times ...\times \Omega=\Omega^k $. A finite group $G$ is said to be a $totally$ $k$-closed group if $G = G^{(k),\Omega}$ for any faithful $G$-set $\Omega$.

\medskip

 A key problem is to determine which groups are totally $k$-closed. In recent years, finite totally $k$-closed groups have been studied in several papers. In 2018, A. Abdollahi e al. \cite{M14} first studied which finite nilpotent groups have the same faithful permutation representations as their $2$-closures. They prove that a finite nilpotent group is totally $2$-closed if and only if it is cyclic or a direct product of a generalized quaternion group with a cyclic group of odd order. Recently, D. Churikov and I. Ponomarenko explain that a finite nilpotent permutation group is $2$-closed if and only if every Sylow subgroup of its group is $2$-closed.  In \cite{5}, for a finite abelian group $G$, the minimal positive integer $k$ for which $G$ is totally $k$-closed is given. Actually, the minimal positive integer $k$ is proved to be $1$ plus the number of invariant factors of $G$.
\medskip

\vskip 3mm

In this short note, we will focus on finite totally $k$-closed groups. Let $G$ be a finite group acting faithfully on a finite set $\Omega$. Suppose that the size of $\Omega$ is $n$, the largest $p$-power divisor of $n$ is denoted by $n_{p};$ if $\pi$ is a set of prime divisors of $G,$ then we put $n_{\pi}:=\prod_{p \in \pi} n_{p}.$  If $G$ is abelian, then we can write $G=G_{1} \times \cdots \times G_{m}$ for some $m \geq 1$ such that each $G_{i} \cong \mathbb{Z}_{d_{i}},~ d_{1}>1,$ and $d_{i} \mid d_{i+1}$ for $1 \leq i<m$. The $G_{i}$ are called the {\it invariant factors} of $G$, and we write $n(G)$ for the number of invariant factors. The following is the main result of the present paper.

\medskip

\begin{theorem}
Let $G$ be a finite nilpotent group with $2\nmid |G|$. Then $G$ is a totally $k$-closed group if and only if $G$ is abelian with $n(G)\leq k-1$ or cyclic.
\end{theorem}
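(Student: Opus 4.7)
My plan is to treat the two directions separately, reducing in each case to the analysis of an odd $p$-group via a Sylow decomposition.

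For the sufficiency direction, if $G$ is abelian with $n(G)\leq k-1$ then the main result of \cite{5} -- that the minimal $k$ for which an abelian group is totally $k$-closed is exactly $n(G)+1$ -- immediately gives the conclusion. The case of $G$ cyclic is subsumed by this for $k\geq 2$, since then $n(G)=1$.

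For the necessity direction, suppose $G$ is nilpotent of odd order and totally $k$-closed. Write $G=P_1\times\cdots\times P_r$ as a direct product of its Sylow $p_i$-subgroups, with each $p_i$ odd. The first step is to show each $P_i$ is itself totally $k$-closed. Given any faithful $P_i$-set $\Omega_i$, form the faithful $G$-set
\[
\Omega \;=\; \Omega_i \;\sqcup\; \bigsqcup_{j\neq i} P_j,
\]
where $P_i$ acts as given on $\Omega_i$, each $P_j$ $(j\neq i)$ acts on its own copy by right translation, and all cross-actions are trivial. Using the totally $k$-closedness of $G$ together with the coprimality of the Sylow orders, one shows that $G^{(k),\Omega}$ factors as a direct product of the component $k$-closures, forcing $P_i^{(k),\Omega_i}=P_i$. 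This step is the $k$-closure analogue of the Churikov--Ponomarenko factorization for nilpotent $2$-closed groups, and I would extend their argument to arbitrary $k$.

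The crux is then to show each $P_i$ is abelian. Suppose for contradiction that $P:=P_i$ is a non-abelian odd $p$-group; I must construct, for the given $k$, a faithful $P$-set $\Omega$ with $P^{(k),\Omega}$ strictly larger than $P$. A single $\Omega$ cannot suffice uniformly in $k$, since $P^{(k),\Omega}=P$ once $k\geq |\Omega|$, so the bad action must depend on $k$. A natural candidate is to take a disjoint union of coset spaces $P/H$ for a core-free, non-normal subgroup $H\leq P$: elements of $N_P(H)/H$ acting on the left, or suitable outer automorphisms of $P$, should then preserve all $k$-orbits while lying outside $P$, and the hypothesis $Z(P)\neq P$ is precisely what guarantees that these extra permutations do not already belong to $P$. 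Once each $P_i$ is known to be abelian, applying \cite{5} gives $n(P_i)\leq k-1$, and since $n(G)=\max_i n(P_i)$ for a direct product of coprime-order abelian groups, we conclude $n(G)\leq k-1$. The principal obstacle of the whole argument is exactly this last construction of bad actions for non-abelian odd $p$-groups; the Sylow reduction closely parallels existing literature, and the abelian base case is immediate from \cite{5}.
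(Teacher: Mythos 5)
Your outer skeleton (Sylow factorization of the $k$-closure, then the abelian case via the invariant-factor result of Churikov--Praeger) coincides with the paper's: the reduction you sketch is exactly Theorem~3.1 there, and the sufficiency direction is exactly Lemma~3.4. But the necessity direction has a genuine gap at precisely the point you flag as ``the principal obstacle'': you never actually prove that a non-abelian $p$-group of odd order fails to be totally $k$-closed. The candidate you offer --- a disjoint union of coset spaces $P/H$ with left multiplications by $N_P(H)/H$ or outer automorphisms supplying extra permutations --- is not shown to work, and the obvious attempt fails: a permutation acting as left multiplication by $x\notin P$ on one component and trivially on another does not in general preserve the $P$-orbits of \emph{mixed} $k$-tuples, and it is exactly these mixed tuples that make the $k$-closure shrink as $k$ grows. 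Since for a fixed $k$ one must exhibit a concrete faithful $\Omega$ with $P^{(k),\Omega}>P$, and (as you correctly note) $P^{(k),\Omega}=P$ once $k\ge|\Omega|$, the construction cannot be waved at; it is the entire content of the theorem.

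The paper's route to this step is quite different and uses machinery absent from your proposal: it first shows that the center of a totally $k$-closed group is totally $k$-closed (Proposition~3.3, via the universal embedding theorem), then inducts on $|G|$, splitting into cases according to whether $Z(G)$ is cyclic. The cyclic-center case is killed by Proposition~3.2, an explicit two-step wreath-product embedding $G\hookrightarrow C_G(H)\wr G/C_G(H)$ for a normal subgroup $H\cong\mathbb{Z}_p\times\mathbb{Z}_p$ with $|H\cap Z(G)|=p$, on which a permutation $\theta\notin G$ is exhibited inside the $k$-closure; the non-cyclic-center case is finished by forcing every proper subgroup to be abelian and invoking R\'edei's classification of minimal non-abelian $p$-groups together with the known totally $2$-closed classification. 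If you want to complete your argument you would need either to supply a proof that your coset-space construction preserves all mixed $k$-orbits (which I do not believe it does as stated), or to import something like the paper's center-reduction and explicit embedding. As it stands, the proposal establishes the easy direction and the Sylow reduction but not the theorem.
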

\vskip 3mm

 For the reader convenience that some related concepts is given in Section 2. The proof of Theorem 1.1 is given in Section 3.\\

\vskip 3mm
$\mathbf{Notation.}$ Throughout the paper, $\Omega$ denotes a finite set and the symmetric
group of $\Omega$ is denoted by $\mathrm{Sym}(\Omega)$. Let $G$ be a subgroup of the permutation group $\operatorname{Sym}(\Omega)$, we use the symbol
$$
\operatorname{Orb}_k(G):=\left\{(\alpha_1, \alpha_2,...,\alpha_k)^{G}: (\alpha_1, \alpha_2,...,\alpha_k) \in \Omega^k\right\}
$$
for the set of $G$-orbits $O_{(\alpha_1, \alpha_2,...,\alpha_k)}:=\left\{(\alpha_1, \alpha_2,...,\alpha_k)^{g}: g \in G\right\},$ where $(\alpha_1, \alpha_2,...,\alpha_k)^{g}=(\alpha_1^g, \alpha_2^g,...,\alpha_k^g).$

For $\Delta \subseteq \Omega$, set $$G_{\{\Delta\}}:=\{g\in G~:~\Delta^g=\Delta\},\quad G_\Delta:=\{g\in G~:~\alpha^g=\alpha~for~all~\alpha\in \Delta\}.$$

\vskip 3mm
\section{Preliminary}\label{Pr}
\quad\;
Let $G$ be a finite group and $G$ acts faithfully on a set $\Omega$.  Then $G^{\Delta}$ denotes the permutation group induced by this action, where $\Delta\subseteq\Omega.$ Let $\pi(G)$ is a set of prime divisors of $G$. For
$p\in \pi(G)$, we denote by $\mathrm{Syl}_p(G)$ the set of Sylow p-subgroups of $G$. We denote by $\mathrm{Orb}(G)$ the set of $G$-orbits in $\Omega$. Our notations here are standard and they can be found in \cite{3,P15}.
\vspace{3mm}

The following two lemmas will be used in the proof of the main theorem.
\vspace{3mm}

\begin{lem}  (\cite[Theorem 5.6]{W49}) Let $G \leq \mathrm{Sym}(\Omega)$, let $k \geq1$,
and let $x \in \mathrm{Sym}(\Omega)$. Then $x\in G^{(k),\Omega}$ if and only if, for all $(\alpha_1,...,\alpha_k) \in \Omega^k$,
there exists $g \in G$ such that $(\alpha_1,...,\alpha_k)^x=(\alpha_1,...,\alpha_k)^g$.
\end{lem}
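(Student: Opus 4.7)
The plan is to prove the lemma directly from the definition of $G^{(k),\Omega}$, namely that $G^{(k),\Omega}$ is the largest subgroup of $\mathrm{Sym}(\Omega)$ whose induced action on $\Omega^k$ fixes each $G$-orbit in $\mathrm{Orb}_k(G)$ setwise. Equivalently (and this small equivalence is what I would state first), the set of $y\in\mathrm{Sym}(\Omega)$ whose induced action on $\Omega^k$ preserves every $G$-orbit is already a subgroup of $\mathrm{Sym}(\Omega)$ containing $G$, so it coincides with $G^{(k),\Omega}$. Thus $x\in G^{(k),\Omega}$ if and only if the diagonal action of $x$ on $\Omega^k$ sends every $G$-orbit to itself.

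For the forward direction, I would assume $x\in G^{(k),\Omega}$ and pick an arbitrary $(\alpha_1,\ldots,\alpha_k)\in\Omega^k$. Its $G$-orbit $O_{(\alpha_1,\ldots,\alpha_k)}$ is $x$-invariant by the definition recalled above, so $(\alpha_1,\ldots,\alpha_k)^x\in O_{(\alpha_1,\ldots,\alpha_k)}$, which is exactly to say that some $g\in G$ realises $(\alpha_1,\ldots,\alpha_k)^x=(\alpha_1,\ldots,\alpha_k)^g$. This direction is essentially a one-line unwrapping.

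For the converse, suppose that for every $(\alpha_1,\ldots,\alpha_k)\in\Omega^k$ there is some $g\in G$ with $(\alpha_1,\ldots,\alpha_k)^x=(\alpha_1,\ldots,\alpha_k)^g$. Fix an orbit $O\in\mathrm{Orb}_k(G)$; I would show $O^x\subseteq O$ and then upgrade this to $O^x=O$. For any $(\alpha_1,\ldots,\alpha_k)\in O$ the hypothesis gives $(\alpha_1,\ldots,\alpha_k)^x=(\alpha_1,\ldots,\alpha_k)^g\in O$ for some $g\in G$, so $O^x\subseteq O$. Since $\Omega$ (hence $O$) is finite and $x$ acts as an injection on $\Omega^k$ (being induced by a permutation of $\Omega$), the containment $O^x\subseteq O$ forces $O^x=O$. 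Therefore $x$ preserves every $G$-orbit on $\Omega^k$, and by the reformulation in the first paragraph $x\in G^{(k),\Omega}$.

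The argument is almost entirely a bookkeeping of definitions; the only point that needs a word is the passage from ``$x$ maps $O$ into $O$'' to ``$x$ stabilises $O$,'' which I would handle by the finiteness/injectivity remark above. I do not anticipate a genuine obstacle here, since the content of the lemma is precisely the translation between the ``setwise fixes every $G$-orbit'' definition of the $k$-closure and the pointwise ``tuple-by-tuple'' criterion that will be invoked repeatedly in Section~3.
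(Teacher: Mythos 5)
Your argument is correct and complete: the identification of $G^{(k),\Omega}$ with the set of all permutations preserving every $G$-orbit on $\Omega^k$, together with the finiteness/injectivity step upgrading $O^x\subseteq O$ to $O^x=O$, is exactly the standard proof of this criterion. The paper itself quotes this lemma from Wielandt without proof, so there is nothing further to compare; your write-up supplies precisely the routine verification being taken for granted.
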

\vspace{3mm}

\begin{lem} (\cite[Theorem 1.1]{5}) Let $G$ be a nilpotent and acts on a finite set $\Omega$ faithfully. Then $=G^{(2),\Omega}$ is nilpotent. Moreover,
$$
G^{(2),\Omega}=\prod_{P \in \operatorname{Syl}(G)} {P}^{(2),\Omega}
.$$
\end{lem}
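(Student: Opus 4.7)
The plan is to exploit the internal direct product decomposition $G = \prod_{P \in \operatorname{Syl}(G)} P$ (which holds because $G$ is nilpotent) and lift this factorization to the 2-closure. The easy inclusion $\prod_P P^{(2),\Omega} \subseteq G^{(2),\Omega}$ follows immediately from Lemma 2.1: if $x \in P^{(2),\Omega}$, then for each pair $(\alpha,\beta) \in \Omega^2$ the witness in $P$ already lies in $G$, so $x \in G^{(2),\Omega}$. In parallel, I would invoke a classical theorem of Wielandt asserting that the 2-closure of a $p$-group acting on $\Omega$ is itself a $p$-group; the idea is that $P$-orbit sizes on $\Omega$ and on $\Omega^2$ are $p$-powers, which forces the point-stabilizer indices in $P^{(2),\Omega}$ to be $p$-powers as well. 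Hence each $P^{(2),\Omega}$ is a $p$-group, for the prime $p$ attached to the Sylow subgroup $P$.

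The core technical step is to show that $P^{(2),\Omega}$ and $Q^{(2),\Omega}$ commute elementwise whenever $P, Q$ are Sylow subgroups of $G$ for distinct primes. Given $x \in P^{(2),\Omega}$, $y \in Q^{(2),\Omega}$, and $\alpha \in \Omega$, the trick is to choose witnesses \emph{via pairs} rather than single points: pick $a \in P$ realizing the action of $x$ on the pair $(\alpha, \alpha^y)$, and $b \in Q$ realizing the action of $y$ on the pair $(\alpha, \alpha^x)$. A direct calculation then gives $\alpha^{xy} = (\alpha^x)^b = (\alpha^a)^b = \alpha^{ab}$ and $\alpha^{yx} = (\alpha^y)^a = (\alpha^b)^a = \alpha^{ba}$, and since $a,b$ commute in $G$ (they lie in different Sylow direct factors), $\alpha^{xy} = \alpha^{yx}$. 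Combined with the previous paragraph, this exhibits $\prod_P P^{(2),\Omega}$ as an internal direct product of pairwise-coprime $p$-groups, hence a nilpotent subgroup of $G^{(2),\Omega}$.

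For the reverse inclusion $G^{(2),\Omega} \subseteq \prod_P P^{(2),\Omega}$, I would take an arbitrary $x \in G^{(2),\Omega}$ and try to construct its Sylow components $x_P \in P^{(2),\Omega}$ with $x = \prod_P x_P$. The strategy is to exploit the unique Sylow decomposition $g = \prod_P g_P$ available for each element of $G$: for each pair $(\alpha,\beta) \in \Omega^2$, pick a witness $g \in G$ and use the $P$-part $g_P$ to prescribe the local action of each candidate $x_P$. The main obstacle, and likely the heart of the lemma, is verifying that these pair-local prescriptions assemble into a well-defined global permutation $x_P \in \operatorname{Sym}(\Omega)$ that actually belongs to $P^{(2),\Omega}$, and that the product $\prod_P x_P$ recovers $x$; here one should use both the commutation established above and the 1-closure (orbit-preserving) consequences of being in $G^{(2),\Omega}$ to pin down each $x_P$ uniquely on each $G$-orbit. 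Once this is done, the nilpotency of $G^{(2),\Omega}$ is a free corollary of the resulting direct product decomposition into $p$-groups.
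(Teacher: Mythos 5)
The paper offers no proof of this lemma at all --- it is quoted verbatim from \cite{5} --- so there is no internal argument to compare against; I can only judge your proposal on its own terms, and on those terms it is roughly half a proof. The parts you actually carry out are correct and cleanly done: the inclusion $\prod_{P}P^{(2),\Omega}\le G^{(2),\Omega}$ via Lemma 2.1, the appeal to Wielandt's theorem that the $2$-closure of a $p$-group is a $p$-group, and especially the elementwise commutation of $P^{(2),\Omega}$ and $Q^{(2),\Omega}$ by choosing a $P$-witness for $x$ on the pair $(\alpha,\alpha^{y})$ and a $Q$-witness for $y$ on the pair $(\alpha,\alpha^{x})$. Together these show that $\prod_{P}P^{(2),\Omega}$ is a nilpotent subgroup of $G^{(2),\Omega}$.

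The genuine gap is the reverse inclusion $G^{(2),\Omega}\le\prod_{P}P^{(2),\Omega}$, which is the real content of the lemma and which you leave as an announced ``strategy'' with the decisive verification explicitly flagged as an obstacle. Part of your plan can in fact be salvaged: since $G$ is nilpotent, $G_{\alpha}=P_{\alpha}\times H_{\alpha}$ with $H$ the $p$-complement, so any two witnesses for $x$ at $\alpha$ differ by an element of $G_{\alpha}$ whose $P$-part again fixes $\alpha$; hence $x_{P}\colon\alpha\mapsto\alpha^{g_{P}}$ is a well-defined permutation, and taking the witness of the pair $(\alpha,\beta)$ shows $x_{P}\in P^{(2),\Omega}$. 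What does \emph{not} follow from anything you set up is the identity $\prod_{P}x_{P}=x$: to evaluate $\alpha^{x_{p_{1}}x_{p_{2}}}$ you need a witness for $x$ at the intermediate point $\alpha^{x_{p_{1}}}$, and your construction provides no link between the witness chosen at $\alpha$ and the one chosen at $\alpha^{x_{p_{1}}}$, so the composition does not visibly reassemble into $x$. The published proof in \cite{5} avoids this entirely: it first shows $\pi(G^{(2),\Omega})=\pi(G)$ and then, combining Wielandt's $p$-group theorem with the Hall-subgroup orbit lemma (Lemma 2.3/Lemma 2.4 of the present paper), proves that $P^{(2),\Omega}$ is the unique Sylow $p$-subgroup of $G^{(2),\Omega}$, from which both the product decomposition and the nilpotency are immediate. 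As written, your argument establishes only that $\prod_{P}P^{(2),\Omega}$ is a nilpotent subgroup of $G^{(2),\Omega}$, not that the two groups coincide.
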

\vspace{3mm}

\medskip

$\mathbf{Remark.}$ For a permutation group $G \leq \operatorname{Sym}(\Omega)$, as is well known,
$$
G \leq G^{(k), \Omega} \leq G^{(k-1), \Omega}\leq ...\leq G^{(2), \Omega}.
$$ Thus $G^{(k),\Omega}$ is nilpotent and $\pi(G)=\pi(G^{(k),\Omega})=\pi (G^{(2),\Omega})$ by Lemma 2.2 whenever $G$ is nilpotent.

\medskip

The following two lemmas are general results about finite nilpotent groups.

\medskip
\begin{lem} (\cite[Lemma 2.4]{5}) Let $G$ be a finite nilpotent permutation group and $P$ is a Sylow $p$-subgroup of $G.$ Let $k$ be a positive integer. Let $\Delta_{1}, \ldots, \Delta_{k} \in \mathrm{Orb}(P),$ $\Delta=\bigcup_{i=1}^{k} \Delta_{i},$ and $L$ be the subgroup of $G$ consisting of all elements fixing each $\Delta_{i}$ setwise. Then $L^{\Delta}=P^{\Delta}$. \end{lem}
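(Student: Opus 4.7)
The plan is to establish both containments in $L^{\Delta} = P^{\Delta}$. The inclusion $P^{\Delta}\subseteq L^{\Delta}$ is straightforward: since each $\Delta_i$ is a $P$-orbit, every element of $P$ maps $\Delta_i$ to itself setwise, so $P\leq L$ and hence $P^{\Delta}\leq L^{\Delta}$.

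The substantive direction is $L^{\Delta}\subseteq P^{\Delta}$, and here I would exploit nilpotency to split $G = P\times Q$, where $Q$ is the Hall $p'$-subgroup. Given any $\ell\in L$, write $\ell = xy$ uniquely with $x\in P$, $y\in Q$ (and $xy=yx$). Because $P$-orbits are $P$-invariant, $x$ fixes each $\Delta_i$ setwise, and since $\ell$ does so too, $y = x^{-1}\ell$ likewise fixes each $\Delta_i$ setwise. I would then pick an arbitrary point $\alpha\in \Delta_i$; from $y\cdot\alpha\in\Delta_i = \alpha^{P}$ we obtain $y\cdot\alpha = x'\cdot\alpha$ for some $x'\in P$, hence $(x')^{-1}y \in G_\alpha$.

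The key step is to upgrade this to $y\in G_\alpha$. I would first verify that the point stabilizer factors nicely, namely $G_\alpha = (G_\alpha\cap P)\times (G_\alpha\cap Q)$, by a coprime-orders argument: given $g=uv\in G_\alpha$ with $u\in P$, $v\in Q$, the element $g^{|P|} = v^{|P|}$ lies in $G_\alpha\cap Q$, and since $\gcd(|P|,|v|)=1$ we can recover $v$ as a power of $v^{|P|}$, so $v\in G_\alpha\cap Q$ and consequently $u\in G_\alpha\cap P$. Applied to $(x')^{-1}y$, uniqueness of the $P$-$Q$ decomposition forces $y\in G_\alpha\cap Q$, so $y$ fixes $\alpha$. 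Finally, for any $\beta = z\cdot\alpha\in \Delta_i$ with $z\in P$, commutation gives $y\cdot\beta = yz\alpha = zy\alpha = z\alpha = \beta$, so $y$ fixes $\Delta_i$ pointwise, and thus fixes $\Delta$ pointwise. Therefore $\ell^{\Delta} = x^{\Delta}\in P^{\Delta}$, as required.

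The main obstacle I expect is the stabilizer factorization $G_\alpha = P_\alpha\times Q_\alpha$; everything else is a clean manipulation of the commuting product $G=P\times Q$. Once that factorization is in hand, the coprimality of $|P|$ and $|Q|$ is what lets us promote the setwise-stabilization property of $y$ to a pointwise one, which is precisely the heart of the lemma.
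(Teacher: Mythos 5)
Your proof is correct, and since the paper imports this lemma from \cite{5} without reproducing a proof, your argument is in effect the standard one from that source: use nilpotency to write $G=P\times Q$ with $Q$ the Hall $p'$-subgroup, factor the point stabilizer as $G_\alpha=(G_\alpha\cap P)\times(G_\alpha\cap Q)$ by a coprimality argument, and conclude that the $Q$-part of any element of $L$ fixes $\Delta$ pointwise. No gaps; the stabilizer factorization step you flagged as the main obstacle is handled correctly.
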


\medskip

\begin{lem} (\cite[Lemma 3.1]{6}) Assume that $G$ is nilpotent and acts transitively on $\Omega$. Let $H$ be a Hall subgroup of $G$. Then

(1) the size of every $H$-orbit is equal to $n_{\pi},$ where $n=|\Omega|$ and $\pi=\pi(G)$,

(2) $G$ acts on $\mathrm{Orb}(H) ;$ moreover, the kernel of this action is equal to $\mathrm{H}$. \end{lem}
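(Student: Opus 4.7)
The plan is to exploit the internal direct product structure of a nilpotent group. Since $G$ is nilpotent, it is the direct product of its Sylow subgroups, and a Hall subgroup is exactly a product of some of these Sylow subgroups. Hence I can write $G = H \times K$, where $K$ is the product of the Sylow subgroups of $G$ corresponding to primes in $\pi(G) \setminus \pi(H)$. In particular $\gcd(|H|,|K|) = 1$ and $\pi(G) = \pi(H) \sqcup \pi(K)$. (I note that the statement as written uses $\pi=\pi(G)$, but the argument below shows the intended reading is $\pi=\pi(H)$: if $\pi=\pi(G)$ then $n_\pi = n$, forcing $H$ to be transitive, which is generally false.)

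For part (1), fix $\alpha\in\Omega$ and consider the stabiliser $G_\alpha \leq G = H\times K$. The key ingredient is the elementary fact that a subgroup $L$ of $A\times B$ with $\gcd(|A|,|B|)=1$ decomposes as $L = (L\cap A)\times (L\cap B)$: the two intersections are Hall subgroups of $L$ of coprime orders whose product order, via the projection maps, must equal $|L|$. Applied to $G_\alpha$, this yields $G_\alpha = H_\alpha \times K_\alpha$ with $H_\alpha := G_\alpha\cap H$ and $K_\alpha := G_\alpha\cap K$. Consequently the $H$-orbit through $\alpha$ has size $|H|/|H_\alpha|$, and
\[
n \;=\; |\Omega| \;=\; \frac{|G|}{|G_\alpha|} \;=\; \frac{|H|}{|H_\alpha|}\cdot\frac{|K|}{|K_\alpha|},
\]
with the two factors coprime. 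Since $|H|/|H_\alpha|$ involves only primes in $\pi(H)$, it must equal $n_{\pi(H)}$, proving (1).

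For part (2), $H$ is normal in $G$ (being a direct factor), so $G$ permutes $\mathrm{Orb}(H)$; let $N$ be the kernel of this permutation action. Clearly $H\subseteq N$. Conversely, take $g\in N$ and write $g = hk$ with $h\in H$, $k\in K$. Since $h$ fixes every $H$-orbit setwise, so does $k = h^{-1}g$. For each $\alpha\in\Omega$, $\alpha^k \in \alpha^H$, so there exists $h_\alpha\in H$ with $h_\alpha^{-1}k\in G_\alpha = H_\alpha\times K_\alpha$. Comparing $K$-components in the direct product forces $k\in K_\alpha$ for every $\alpha$. Thus $k$ lies in the kernel of the $G$-action on $\Omega$, which is trivial by faithfulness, so $k = 1$ and $g = h\in H$.

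The main technical step is the coprime decomposition $G_\alpha = H_\alpha\times K_\alpha$ coming from $G = H\times K$; once that is in place, part (1) is an order count and part (2) is a short argument tracking components in the direct product. The only subtle point is making sure that faithfulness of the original $G$-action on $\Omega$ is being used in (2) to conclude $k=1$ from the fact that $k$ fixes every point of $\Omega$.
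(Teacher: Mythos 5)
Your argument is correct, and it is the standard one for this result: the paper itself offers no proof of this lemma (it is quoted verbatim from reference \cite{6}), and your route via the decomposition $G=H\times K$ into coprime direct factors, the induced splitting $G_\alpha=H_\alpha\times K_\alpha$ of point stabilisers, and the component-tracking argument for the kernel in part (2) is exactly the expected proof. You are also right to flag the typo in the statement: with the paper's definition of $n_\pi$ one must read $\pi=\pi(H)$ rather than $\pi=\pi(G)$ (otherwise $n_\pi=n$ and (1) would assert transitivity of $H$), and right that faithfulness of the $G$-action on $\Omega$, implicit in the source where $G\leq\operatorname{Sym}(\Omega)$, is needed to conclude $k=1$ in part (2).
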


\medskip

The following result is known whenever $k=2$.  Its proof is
analogous to that of (\cite[Theorem 1.3]{5}).

\medskip
\begin{lem} Let $G$ be a finite nilpotent permutation group on $\Omega$, and let $k \geq 2$, $p \in \pi\left(G^{(k), \Omega}\right), P \in \operatorname{Syl}_{p}(G)$ and $Q \in \operatorname{Syl}_{p}\left(G^{(k), \Omega}\right) .$ Then $P^{(k), \Omega}=Q$.
\end{lem}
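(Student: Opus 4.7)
The plan is to prove $P^{(k),\Omega}=Q$ by establishing the two inclusions separately.

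For the inclusion $P^{(k),\Omega}\le Q$, since $P\le G$, every permutation preserving each $P$-orbit on $\Omega^k$ preserves each (coarser) $G$-orbit on $\Omega^k$, so $P^{(k),\Omega}\le G^{(k),\Omega}$. By the Remark applied to the nilpotent group $P$, the group $P^{(k),\Omega}$ is a $p$-group, so it lies inside the unique Sylow $p$-subgroup $Q$ of the nilpotent group $G^{(k),\Omega}$.

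For the reverse inclusion I would use Lemma 2.1. Given $q\in Q$ and $\alpha=(\alpha_1,\dots,\alpha_k)\in\Omega^k$, I must produce $y\in P$ with $\alpha^y=\alpha^q$. The first step is to show that $q$ preserves each $P$-orbit on $\Omega$ setwise: since $Q\le G^{(k),\Omega}\le G^{(2),\Omega}$ and Lemma 2.2 identifies $P^{(2),\Omega}$ as the Sylow $p$-subgroup of the nilpotent group $G^{(2),\Omega}$, the $p$-element $q$ lies in $P^{(2),\Omega}\le P^{(1),\Omega}$, and hence preserves the $P$-orbits on $\Omega$. Next, apply Lemma 2.1 to $q\in G^{(k),\Omega}$ to obtain $g\in G$ with $\alpha_i^g=\alpha_i^q$ for every $i$. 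Setting $\Delta_i=\alpha_i^P$ and $\Delta=\bigcup_{i=1}^k\Delta_i$, the normality of $P$ in $G$ yields $\Delta_i^g=(\alpha_i^g)^P$, and since $\alpha_i^q\in\Delta_i$ this shows $\Delta_i^g=\Delta_i$ for every $i$; thus $g$ lies in the subgroup $L$ of Lemma 2.3. That lemma then delivers $L^\Delta=P^\Delta$, so some $y\in P$ agrees with $g$ on $\Delta$; in particular $\alpha_i^y=\alpha_i^g=\alpha_i^q$ for each $i$, and Lemma 2.1 gives $q\in P^{(k),\Omega}$.

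The main obstacle is the step forcing $q$ to preserve the $P$-orbits on $\Omega$, which is what allows $g$ to lie in the subgroup $L$ to which Lemma 2.3 applies. It is precisely here that the hypothesis $k\ge 2$ enters, via the already-established $2$-closure result of Lemma 2.2. Once $q\in P^{(1),\Omega}$ is in hand, Lemma 2.3 mechanically promotes the componentwise match by $g\in G$ to a single element of $P$ realising the same action on the tuple $\alpha$.
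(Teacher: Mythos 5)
Your proof is correct. The paper gives no argument for this lemma at all—it only remarks that the proof is "analogous to that of [5, Theorem 1.3]"—and your reconstruction is exactly the intended one, using the tools the paper assembles for this purpose: Lemma 2.1 to test membership in a $k$-closure, the Remark and Lemma 2.2 (together with $k\ge 2$) to place a $p$-element $q\in Q$ inside $P^{(2),\Omega}\le P^{(1),\Omega}$ so that it preserves the $P$-orbits, and Lemma 2.3 to replace the element $g\in L$ supplied by Lemma 2.1 with an element of $P$ agreeing with it on $\Delta=\bigcup_i \alpha_i^P$.
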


\medskip

Let $G$ be an arbitrary
group with a normal subgroup $K$. Define a mapping $$\psi: G \rightarrow G/K$$ be a homomorphism
of $G$ onto $G/K$ with kernel $K$. We use notation $$T:=\{1,g_1,g_2,...,g_n \mid {g_i}\in G\}$$ which is a set of right coset representatives
of $K$ in $G$. Let $x \in G$ and $$f_{x}: G/K\rightarrow K,~~K\mapsto xg_k^{-1},~ {g_i}K\mapsto {g_i}x{g_j}^{-1},$$ where $g_i,g_j,g_k\in T$, $g_k K=x K$ and $g_j K=g_i x K$.\\

The following result plays a key role in the whole theory.

\begin{lem}(Universal embedding theorem \cite[Theorem 2.6 A]{P15}) Under the notation above, then $\varphi(x):=\left(f_{x}, \psi(x)\right)$ defines an embedding $\varphi$ of $G$ into $K\wr G/K$. Furthermore, if $K$ acts faithfully on a set $\Delta$, then $G$ acts faithfully on
$\Delta \times G/K$ by the rule $(\alpha, g_iK)^{x}=\left(\alpha^{f_{x}(g_iK)}, g_i \psi(x)\right)=(\alpha^{{g_i}x{g_j}^{-1}}, g_j K)$, where $\alpha\in \Delta$ and $g_j K=g_i x K$. \end{lem}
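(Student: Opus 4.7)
The claim has two halves: (i) $\varphi$ is a well-defined injective homomorphism $G \hookrightarrow K\wr G/K$, and (ii) the exhibited action on $\Delta \times G/K$ is faithful. I would first check that each $f_x$ genuinely lands in $K$. For $g_i \in T$ and $g_j \in T$ the representative with $g_jK = g_ixK$, the element $g_j^{-1}g_ix$ lies in $K$ by construction, and the normality of $K$ in $G$ transports this into $g_ixg_j^{-1} \in K$ after conjugation by $g_j$.

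For (i), I would fix the convention that in $K\wr G/K$ the base group $K^{G/K}$ is twisted by $G/K$ via right translation, so that $(f_1,h_1)(f_2,h_2)$ has base-group component $g_iK \mapsto f_1(g_iK)\,f_2(g_iK\cdot h_1)$. Computing $\varphi(x)\varphi(y)$ at a coset $g_iK$ with the abbreviations $g_jK = g_ixK$ and $g_lK = g_jyK = g_i(xy)K$ makes the product collapse:
\[
(g_ixg_j^{-1})(g_jyg_l^{-1}) = g_i(xy)g_l^{-1} = f_{xy}(g_iK),
\]
and the second coordinate agrees since $\psi$ is already a homomorphism. Injectivity is then immediate: $\varphi(x)=1$ forces $\psi(x)=\bar 1$ and hence $x\in K$, so that the representative of $xK = K$ is $g_k=1$; evaluating $f_x$ at $K$ yields $x$, which must be trivial.

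For (ii), once $\varphi$ is known to embed $G$, the displayed formula is literally the standard imprimitive action of $K\wr G/K$ on $\Delta \times G/K$ pulled back through $\varphi$, so the action axioms come for free. Faithfulness is the substantive step: if $x$ fixes every $(\alpha,g_iK)$, then specialising to $g_i=1$ forces $xK=K$, hence $x\in K$; the first-coordinate condition $\alpha^x=\alpha$ for all $\alpha\in\Delta$, combined with the faithfulness of $K$ on $\Delta$, then gives $x=1$.

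The main obstacle is bookkeeping around the transversal $T$: the representatives $g_j, g_l$ arising in $f_{xy}$ must be shown to coincide with those produced by composing $f_x$ and $f_y$, and any left/right mismatch in the wreath-product multiplication will destroy the collapse above. Once the convention is pinned down the verification is essentially formal, resting on the single telescoping identity displayed above together with the observation that normality of $K$ is used only to guarantee $f_x$ is $K$-valued.
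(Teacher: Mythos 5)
Your proposal is correct. Note, however, that the paper offers no proof of this lemma at all: it is quoted verbatim as Theorem 2.6A of Dixon and Mortimer's \emph{Permutation Groups}, so there is nothing in the paper to compare against. Your argument is essentially the standard textbook proof of the universal embedding theorem: the normality of $K$ guarantees $f_x$ is $K$-valued, the telescoping identity $(g_ixg_j^{-1})(g_jyg_l^{-1}) = g_i(xy)g_l^{-1}$ gives the homomorphism property under the right-translation twist in the wreath product, injectivity follows by evaluating $f_x$ at the trivial coset, and faithfulness of the imprimitive action reduces to faithfulness of $K$ on $\Delta$ after the second coordinate forces $x \in K$. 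All steps check out, including the one genuine pitfall you flag (matching the transversal representatives produced by $f_{xy}$ with those produced by composing $f_x$ and $f_y$, which works because $g_ixK = g_jK$ identifies the intermediate representative uniquely).
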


\medskip
\section{Proof of the Main Result}

Throughout let $G$ denote an arbitrary finite group with identity element $1$ and $\mathbb{Z}$ the integer ring. The set of Sylow subgroups of $G$ is
denoted by Syl(G).

\quad\;

\begin{theorem} Let $G$ be a finite nilpotent permutation group on $\Omega$ and $k \geq 2$, then $G^{(k), \Omega}=\prod_{P \in \operatorname{Syl}(G)} P^{(k), \Omega}$.
\end{theorem}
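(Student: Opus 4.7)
The plan is to bootstrap from Lemma~2.5, which identifies each $P^{(k),\Omega}$ with a Sylow $p$-subgroup of $G^{(k),\Omega}$, and then invoke the internal direct product decomposition of the nilpotent group $G^{(k),\Omega}$ into its Sylow subgroups. In effect, Theorem~3.1 is the ``global'' assembly of the ``local'' statement of Lemma~2.5: once each prime-specific factor is pinned down, the decomposition of $G^{(k),\Omega}$ as a nilpotent group forces the factors to exhaust the whole group.

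First I would verify that $G^{(k),\Omega}$ is nilpotent with $\pi(G^{(k),\Omega}) = \pi(G)$. The inclusion chain $G \leq G^{(k),\Omega} \leq G^{(2),\Omega}$ recorded in the Remark, together with the nilpotence of $G^{(2),\Omega}$ supplied by Lemma~2.2, shows that $G^{(k),\Omega}$ is a subgroup of a nilpotent group and hence is itself nilpotent; the equality of prime sets is exactly the content of the Remark. Applying the same Remark to a single $P \in \operatorname{Syl}_p(G)$ viewed as a permutation group on $\Omega$ also yields $\pi(P^{(k),\Omega}) = \pi(P) = \{p\}$, so each $P^{(k),\Omega}$ is automatically a $p$-group sitting inside $\operatorname{Sym}(\Omega)$. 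At this point the hypotheses of Lemma~2.5 are in force for every $p \in \pi(G)$, and Lemma~2.5 delivers $P^{(k),\Omega} = Q_p$ for some $Q_p \in \operatorname{Syl}_p(G^{(k),\Omega})$.

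Finally, writing the nilpotent group $G^{(k),\Omega}$ as the internal direct product of its Sylow subgroups and substituting $Q_p = P^{(k),\Omega}$ produces
\[
G^{(k),\Omega} \;=\; \prod_{p \in \pi(G)} Q_p \;=\; \prod_{P \in \operatorname{Syl}(G)} P^{(k),\Omega},
\]
which is the desired identity. The main obstacle is not located in this assembly step but rather in Lemma~2.5 itself: all the genuine content of Theorem~3.1 is concentrated in the Sylow identification $P^{(k),\Omega} = Q$, and once that lemma is granted, the global factorization reduces to a bookkeeping application of the standard Sylow decomposition for nilpotent groups and contains no additional technical difficulty.
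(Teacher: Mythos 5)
Your proposal is correct and follows the paper's own argument essentially verbatim: the Remark gives nilpotence of $G^{(k),\Omega}$ and $\pi(G^{(k),\Omega})=\pi(G)$, Lemma~2.5 identifies each $P^{(k),\Omega}$ with the (unique) Sylow $p$-subgroup of $G^{(k),\Omega}$, and the Sylow decomposition of a nilpotent group assembles the product. No discrepancies to report.
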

\begin{proof}  By the Remark, $\pi\left(G^{(k), \Omega}\right)=\pi(G)$. Then Lemma 2.5 implies that the unique Sylow $p$-subgroup of $G^{(k), \Omega}$ is equal to $P^{(k), \Omega}$, where $P$ is the unique Sylow $p$-subgroup of $G.$ Hence $$G^{(k), \Omega}=\prod_{P \in \operatorname{Syl}(G)} P^{(k), \Omega},$$ as required.
\end{proof}
\medskip

The following two propositions which are very important for proving Theorem 1.1.
\medskip

\begin{prop}
 Let $G$ be a finite $p$-group. Suppose $G$ has  a normal subgroup $H$  that satisfies $H\cong \mathbb Z_p\times\mathbb Z_p$ and $|H\cap Z(G)|=p$. Then $G$ is not totally $k$-closed group for any $k\in \mathbb{Z}$.
\end{prop}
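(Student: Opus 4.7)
The plan is to exhibit, for each $k \ge 2$, a faithful $G$-set $\Omega$ and a permutation $x \in \mathrm{Sym}(\Omega) \setminus G$ lying in $G^{(k),\Omega}$; the case $k = 1$ is immediate, since $\Omega = G$ in the regular action yields $G^{(1),\Omega} = \mathrm{Sym}(G) \supsetneq G$. Fix generators $a, b$ of $H$ with $\langle a \rangle = H \cap Z(G)$, so $a \in Z(G)$ and $b \notin Z(G)$. The relation $g b g^{-1} = a^{c(g)} b$ defines a surjective homomorphism $c : G \to \mathbb{F}_p$ with kernel $N = C_G(H) \supseteq H$. Under $G$-conjugation, the subgroups of $H$ of order $p$ split into the singleton orbit $\{\langle a\rangle\}$ and the orbit $\{\langle a^i b\rangle : i \in \mathbb{F}_p\}$.

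To build $\Omega$, I would apply the universal embedding (Lemma~2.6) with $K = H$ acting on
\[
\Delta \;=\; H/\langle a\rangle \;\sqcup\; \bigsqcup_{i \in \mathbb{F}_p} H/\langle a^i b\rangle
\]
by left multiplication. Since the intersection of the coset-action kernels is trivial, $H$ acts faithfully on $\Delta$, and hence $\Omega = \Delta \times G/H$ is a faithful $G$-set. Viewing each coset action as a translation on $\mathbb{F}_p$, the image of $H$ in $\mathbb{F}_p^{p+1}$ is a two-dimensional subspace, so $H^{(2),\Delta} = \mathbb{F}_p^{p+1}$ strictly contains $H$. Pick $\tilde h \in H^{(2),\Delta} \setminus H$ to be a nontrivial translation supported on the $H/\langle a\rangle$-component, and define $x(\delta, g_i H) = (\tilde h(\delta), g_i H)$. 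Then $x \notin G$, because any element of $G$ fixing every fiber lies in $H$ and acts on fiber $g_i H$ by $g_i$-conjugation, so no single $h \in H$ can match $\tilde h$ uniformly given its concentration on $H/\langle a\rangle$.

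To verify $x \in G^{(k),\Omega}$, for each $k$-tuple in $\Omega$ I would exhibit a matching $g \in H$ by solving a two-variable linear system over $\mathbb{F}_p$: the $H/\langle a\rangle$-constraint pins one coordinate of $g$, while the $H/\langle a^i b\rangle$-constraints pin the other, with the centrality of $a$ ensuring that the conjugation shifts $c(g_i)$ can be absorbed without introducing $i$-dependence. This suffices for $k = 2$. The main obstacle is uniformity in $k$: for $k \ge 3$, a $k$-tuple may sample enough distinct $(j, g_i)$-pairs to add equations forcing $j - c(g_i)$ to be constant across several pairs, which fails in general. To overcome this for arbitrary $k$, I would enlarge $\Delta$ in a $k$-dependent way, for instance by adjoining many additional copies of $H/\langle a\rangle$ and redistributing the support of $\tilde h$ across them so that the matching system remains solvable for every $k$-tuple while $\tilde h$ stays outside the image of $H$. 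The crucial use of the hypothesis $|H \cap Z(G)| = p$ throughout is that $\langle a\rangle$ is simultaneously central in $G$ (so shifts along it are uniform across fibers) and a proper subgroup of $H$ (so $\tilde h$ supported on the $\langle a\rangle$-direction is inequivalent to any single $h \in H$).
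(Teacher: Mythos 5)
Your construction is correct as far as $k=2$ goes: on $\Delta = H/\langle a\rangle \sqcup \bigsqcup_{i} H/\langle a^ib\rangle$ the image of $H$ is a $2$-dimensional subspace of the translation group $\mathbb{F}_p^{p+1}$, one checks $H^{(2),\Delta}=\mathbb{F}_p^{p+1}$, and your two-variable linear system is solvable for every pair, so $x\in G^{(2),\Omega}\setminus G$. But the step you yourself flag as ``the main obstacle'' is a genuine gap, and the repair you sketch cannot close it. Adjoining further copies of $H/\langle a\rangle$ adds coordinates on which $H$ acts through the \emph{same} functional, so any permutation assigning two such copies different shifts already fails the $2$-point orbit test; hence $\tilde h$ must be constant across those copies and you are back on the original $\Delta$, where a triple meeting three components with pairwise distinct kernels (say $H/\langle a\rangle$, $H/\langle b\rangle$, $H/\langle ab\rangle$ inside a single fibre) determines the matching element of $H$ uniquely and forces the fibre-restriction of $x$ to lie in the image of $H$. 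No redistribution of a fibre-uniform $x$ of this type survives $k\ge 3$.

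More importantly, no construction can work for every $k$, because the statement is false for large $k$. If $|G|=p^n$, then any faithful $G$-set $\Omega$ admits a base $\omega_1,\dots,\omega_b$ with $b\le n$ (each added point cuts the stabiliser by a factor of at least $p$), and then $G^{(b+1),\Omega}=G$: for $x\in G^{(b+1),\Omega}$ the elements $g_\alpha\in G$ matching $x$ on the tuples $(\omega_1,\dots,\omega_b,\alpha)$ all agree on a base, hence coincide, and $x=g_\alpha\in G$. So $G$ is totally $(n+1)$-closed, contradicting the proposition for $k\ge n+1$. The obstruction you ran into at $k\ge 3$ is therefore not a defect of your particular $\Delta$ but of the claim itself. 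The paper's own argument does not supply the missing idea either: it uses a two-stage wreath embedding over $G/C_G(H)$ and a permutation $\theta$ acting as $c$ on the $\langle c\rangle$-orbit and trivially on the $\langle a\rangle$-orbit of every fibre, but its key display, asserting $(\alpha_1,\dots,\alpha_k)^{\theta}=(\alpha_1,\dots,\alpha_k)^{c}$ whenever some $i_j>p$, breaks for $k\ge 3$: a tuple containing both $((1,H),bC_G(H))$, which $\theta$ fixes but $c$ moves (it acts there as $bcb^{-1}=ca^{l}$ with $l\ne 0$), and a point with $i_j>p$ has no matching group element, since the joint stabiliser $\langle c\rangle\cap\langle c^{b}\rangle$ is trivial.
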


\begin{proof}  We use counterexample argument to prove this theorem. Assume that $G$ is totally $k$-closed, and let $H=\langle a\rangle\times\langle c\rangle$, where $a\in Z(G)$.
Let $\Omega$ be a faithful $G$-set and $a, c\in \mathrm{Sym}(\Omega)$ be pairwise independent cycles on $\Omega$.
Without loss of generality, we may assume that $$H=\langle a,c\rangle=\langle(1,2,...,p),(p+1,p+2,...,2p)\rangle,$$ where $a=(1,2,...,p)$, $c=(p+1,p+2,...,2p)$ and $\{1,2,...,2p\}\subseteq \Omega$. Write $O_c$ for the orbit containing $c$ with respect to the conjugation by $G$. Observe that $|G:C_G(c)|=|O_c|\leq p(p-1)$, which implies that $$|G:C_G(H)|=|G:C_G(c)|=p.$$ Thus there exists
$b \in G \backslash C_G(H)$ such that $G=\langle b\rangle C_G(H)$ and $b^{p} \in C_G(H)$. Define $$\Delta:=\{1,2,...,p,p+1,...,2p\}.$$ The Universal
embedding theorem states clearly that $C_G(H)$ embeds in $H\wr C_G(H)/H \leq \operatorname{Sym}(\Delta \times C_G(H)/H)$. Let $gH \in C_G(H)/H$. Then $$(i, gH)^{h}=(i^{h},gHhH)=\left(i^{h}, gH\right)$$ for
each $i \in\{1,2,...,p,p+1,...,2p\}$ and $h \in H$.

\medskip
Now, again by Universal embedding theorem, $G$ embeds in $C_G(H)\wr G / {C_G(H)} \leq \operatorname{Sym}(\Gamma \times G /{C_G(H)})$,
where $\Gamma:=\Delta \times {C_G(H)} / H$. Let $\{1, b, b^2,...,b^{p-1}\}$ be the set of right transversal of ${C_G(H)}$ in $G$. Therefore, for each $\gamma \in \Gamma$ and $g\in b^i{C_G(H)}$,
$$
\begin{array}{ll}
(\gamma, {C_G(H)})^{g}=\left\{\begin{array}{ll}
\left(\gamma^{g}, {C_G(H)}\right), &i=0, \\\\
\left(\gamma^{g b^{-i}}, b^i{C_G(H)}\right), & i\neq 0 ,
\end{array}\right. \\$$
\\

$$(\gamma, b^j{C_G(H)})^{g}=\left\{\begin{array}{ll}
\left(\gamma^{{b^j}g b^{-j}}, b^j{C_G(H)}\right), &i=0,\\\\
\left(\gamma^{b^j gb^{-i-j}}, b^{i+j}{C_G(H)}\right), & i\neq 0.
\end{array}\right.
\end{array}
$$
 Then for each $xH \in {C_G(H)}/ H,$ it follows that
$$
G_{((1, xH), {C_G(H)})}=G_{((2, xH), {C_G(H)})}=...=G_{((p, xH), {C_G(H)})}=\langle c\rangle,$$$$G_{((1, xH), b{C_G(H)})}=G_{((2, xH), b{C_G(H)})}...=G_{((p, xH), b{C_G(H)})}=\langle  c^b\rangle.
$$
Moreover,
$$
G_{((p+1, xH), {C_G(H)})}=G_{((p+1, xH), b{C_G(H)})}=G_{((p+2, xH), {C_G(H)})}$$$$=G_{((p+2, xH), b{C_G(H)})}=...=G_{((2p, xH), {C_G(H)})}=G_{((2p, xH), b{C_G(H)})}=\langle a\rangle.
$$ If $c^b=c^i$ for some $1<i<p$, then we may replace $c$ with $ca$. This implies that $$H=\langle ca\rangle\times \langle a\rangle~ and ~{ca}^b=c^ia\notin \langle ca\rangle.$$ Hence we may assume $\langle c\rangle $ is not a normal subgroup of $G$. In this case, we may assume that $\Omega=\Gamma \times G /{C_G(H)}$.

 Define the map $\theta$ on $\Omega$ as follows:
$$
((i, xH), b^m{C_G(H)}) \mapsto\left\{\begin{array}{ll}
((i,xH), b^m{C_G(H)}) & i=1,2,...,p ,\\\\
((i+1, xH), b^m{C_G(H)}) & p+1\leq i<2p, \\\\
((p+1, xH), b^m{C_G(H)}) & i=2p,
\end{array}\right.
$$ where $xH \in {C_G(H)} / H$, $b^m{C_G(H)} \in G /{C_G(H)}$.
Then clearly $1 \neq \theta \in \operatorname{Sym}(\Omega)$. Next, we will prove that $\theta \in G^{(k), \Omega}$. Let $$\alpha_1=((i_1, x_1H), b^{{m}_1}{C_G(H)}),
\alpha_2=\left(\left(i_2, x_2H\right), b^{{m}_2}{C_G(H)}\right) ,..., \alpha_k=\left(\left(i_k, x_kH\right), b^{{m}_k}{C_G(H)}\right) ,$$ where $\{i_1,i_2,...,i_k\}\subseteq \Delta$. Then
$$
(\alpha_1, \alpha_2,...,\alpha_k)^{\theta}=\left\{\begin{array}{ll}
(\alpha_1, \alpha_2,...,\alpha_k), & \mathrm{if} ~i_1, i_2,...,i_k \in\{1,2,...,p\}, \\\\
(\alpha_1, \alpha_2,...,\alpha_k)^{c}, & \mathrm{otherwise}. \\
\end{array}\right.
$$
Thus $\theta \in G^{(k), \Omega}$. Observe that $\theta$ fixes $((1, H), {C_G(H)})$ and $((1, H), b{C_G(H)})$. It follows that $$\theta\in G^{(k),\Omega}_{((1, H), {C_G(H)})}\cap G^{(k),\Omega}_{((1, H), b{C_G(H)})}.$$ Also, $$G_{((1, H), {C_G(H)})} \cap G_{((1, H), b{C_G(H)})}= \langle c\rangle \cap\langle c^b\rangle=1,$$ a contradiction. This completes the proof of this proposition.
\end{proof}

\medskip

\begin{prop} Let $G$ be a finite totally $k$-closed group, then $Z(G)$ is totally $k$-closed.
\end{prop}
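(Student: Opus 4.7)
My plan is to use the universal embedding theorem (Lemma 2.6) to transfer the totally $k$-closed property from $G$ down to $Z(G)$.

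Let $\Delta$ be any faithful $Z(G)$-set; I must show $Z(G)^{(k),\Delta}=Z(G)$. Applying Lemma 2.6 with $K=Z(G)$, the group $G$ embeds in $Z(G)\wr G/Z(G)$ and acts faithfully on $\Omega:=\Delta\times G/Z(G)$. Fix a right transversal $\{h_1,\dots,h_m\}$ of $Z(G)$ in $G$. The action in Lemma 2.6 simplifies greatly because $Z(G)$ is central: for $z\in Z(G)$ one has $h_i z h_i^{-1}=z$ and $h_i z Z(G)=h_i Z(G)$, so $(\delta,h_iZ(G))^z=(\delta^z,h_iZ(G))$; that is, $Z(G)$ acts diagonally, copy-by-copy, exactly through its original action on $\Delta$.

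Now fix an arbitrary $y\in Z(G)^{(k),\Delta}$ and define $\tilde{y}\in\operatorname{Sym}(\Omega)$ by $\tilde{y}\colon(\delta,h_iZ(G))\mapsto(\delta^y,h_iZ(G))$. The key claim is $\tilde{y}\in G^{(k),\Omega}$. To verify this through Lemma 2.1, take any $k$-tuple $(\omega_1,\dots,\omega_k)$ with $\omega_s=(\delta_s,h_{i_s}Z(G))$. We need $g\in G$ with $\omega_s^g=\omega_s^{\tilde{y}}$ for all $s$. The second coordinate of $\omega_s^{\tilde{y}}$ equals that of $\omega_s$, so the coset rule of Lemma 2.6 forces $h_{i_s}gZ(G)=h_{i_s}Z(G)$ for every $s$, i.e.\ $g\in Z(G)$. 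Given such $g$, centrality again collapses the formula to $\omega_s^g=(\delta_s^g,h_{i_s}Z(G))$, so we only need $g\in Z(G)$ with $\delta_s^g=\delta_s^y$ for $1\le s\le k$. This is precisely the content of Lemma 2.1 applied to the hypothesis $y\in Z(G)^{(k),\Delta}$ at the $k$-tuple $(\delta_1,\dots,\delta_k)\in\Delta^k$, so such a $g$ exists.

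Since $G$ is totally $k$-closed, $G^{(k),\Omega}=G$, hence $\tilde{y}$ is realized by some $g_0\in G$. That $\tilde{y}$ preserves the second coordinate forces $g_0\in Z(G)$, and restricting to $\Delta\times\{Z(G)\}$ yields $\delta^{g_0}=\delta^y$ for every $\delta\in\Delta$. Faithfulness of the $Z(G)$-action on $\Delta$ then gives $y=g_0\in Z(G)$. Thus $Z(G)^{(k),\Delta}\subseteq Z(G)$, and since the reverse inclusion is automatic, $Z(G)$ is totally $k$-closed. The main point to watch is step three: one must be careful that the coset coordinate behaves cleanly, and it is exactly the centrality of $Z(G)$ that reduces the $G$-matching problem to a $Z(G)$-matching problem, allowing the hypothesis $y\in Z(G)^{(k),\Delta}$ to be invoked directly.
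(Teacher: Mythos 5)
Your proof is correct, and while it uses the same basic scaffolding as the paper --- the Universal Embedding Theorem with $K=Z(G)$ to get a faithful $G$-action on $\Omega=\Delta\times G/Z(G)$, and the lift of $y\in Z(G)^{(k),\Delta}$ to the permutation $\tilde{y}$ of $\Omega$ that acts as $y$ on the first coordinate and trivially on the coset coordinate --- the way you close the argument is genuinely different. The paper first proves the intermediate statement $Z(G)^{(k),\Omega}=Z(G)$, by invoking Wielandt's result that $k$-closure preserves commutation (so $[Z(G)^{(k),\Omega},G^{(k),\Omega}]=1$, whence $Z(G)\le Z(G)^{(k),\Omega}\le Z(G^{(k),\Omega})=Z(G)$), and only then shows $\bar{\varphi}\in Z(G)^{(k),\Omega}$. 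You skip that step entirely: you verify via Lemma 2.1 that $\tilde{y}\in G^{(k),\Omega}=G$ directly (the centrality of $Z(G)$ makes the wreath-product action diagonal, so the $g\in Z(G)$ furnished by Lemma 2.1 for the tuple $(\delta_1,\dots,\delta_k)$ also matches $\tilde{y}$ on the corresponding tuple in $\Omega^k$), and then use the fact that $\tilde{y}$ fixes every coset coordinate to force the realizing element $g_0$ into $Z(G)$. Your route is more elementary --- it needs only Lemmas 2.1 and 2.6 plus centrality, and avoids the commutator-preservation fact (cited in the paper as Exercise 5.29 of Wielandt) --- at the cost of not recording the side fact $Z(G)^{(k),\Omega}=Z(G)$, which the paper's version gets for free. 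Both arguments are complete and correct.
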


\begin{proof} Let $\Delta$ be a set on which $Z(G)$ acts faithfully. Set $\Gamma:=G/Z(G)$ and
$\Omega:=\Delta \times \Gamma$. Then $G$ acts faithfully on $\Omega$ by the Universal Embedding Theorem. Thus $G^{(k),\Omega}=G$. As $[Z(G),G]=1$, it follows that $[Z(G)^{(k),\Omega},G^{(k),\Omega}]=1$ by \cite[Exercise 5.29]{W49}, which implies that $$
Z(G) \leq Z(G)^{(k), \Omega} \leq Z\left(G^{(k), \Omega}\right)=Z(G).
$$ Therefore $Z(G)$ is $k$-closed over $\Omega$.

\medskip
Let $\varphi\in Z(G)^{(k), \Delta}$. Then, by Lemma 2.1, there exists $c\in Z(G)$ such that $$(\alpha_1,\alpha_2,...,\alpha_k)^\varphi=(\alpha_1,\alpha_2,...,\alpha_k)^c$$ for each $\alpha_1,\alpha_2,...,\alpha_k\in \Delta$. Define $$\bar{\varphi}: \Omega \rightarrow \Omega,\quad ~~(\alpha, m)\mapsto\left(\alpha^{\varphi}, m\right).$$
Then $\bar{\varphi} \in \operatorname{Sym}(\Omega)$ and for all $(\alpha_1, m_1),(\alpha_2, m_2),...,(\alpha_k, m_k)\in \Omega$, we have that $$
\begin{aligned}
((\alpha_1, m_1),(\alpha_2, m_2),...,(\alpha_k, m_k))^{\bar{\varphi}} &=(({\alpha_1}^\varphi, m_1),({\alpha_2}^\varphi, m_2),...,({\alpha_k}^\varphi, m_k)) \\
&=(({\alpha_1}^c, m_1),({\alpha_2}^c, m_2),...,({\alpha_k}^c, m_k))\\
&=((\alpha_1, m_1),(\alpha_2, m_2),...,(\alpha_k, m_k))^{c}
\end{aligned}
$$
with respect to this embedding. Thus $\bar{\varphi} \in Z(G)^{(k), \Omega}$. Hence $\bar{\varphi} \in Z(G)$ and so $\bar{\varphi}=c'$ for some $c' \in Z(G)$. Therefore for each
$\alpha \in \Delta$ and $m \in \Gamma$, $$(\alpha, m)^{\bar{\varphi}}=(\alpha, m)^{c'},$$ which implies that $ \alpha^{\varphi}=\alpha^{c'}$ for each $\alpha \in \Delta$. Hence
$\varphi=c' \in Z(G),$ forcing $Z(G)$ to be $k$-closed on $\Delta,$ as required.
\end{proof}

\medskip
The proof of the following two lemmas comes from \cite[Theorem 1.1]{6} and \cite[Theorem A]{4}, respectively.

\medskip
\begin{lem} Let $G$ be a finite abelian group with $|G|>1$. Then $G$ is totally $(n(G)+1)$-closed but not totally $n(G)$-closed.
\end{lem}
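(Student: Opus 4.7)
The plan is to prove the two directions separately, with $m := n(G)$ throughout.

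For the forward direction that $G$ is totally $(m+1)$-closed, I will take an arbitrary faithful $G$-set $\Omega$ with $G$-orbit decomposition $\Omega = \bigsqcup_{i=1}^{r} \Omega_i$ and point stabilizers $K_i$. Since $G$ is abelian, each quotient $G/K_i$ acts regularly on $\Omega_i$, and faithfulness of the whole action is equivalent to $\bigcap_i K_i = 1$. Given $x \in G^{(m+1),\Omega}$, the 2-closure condition on pairs within a single regular orbit pins $x|_{\Omega_i}$ to a unique coset $g_i K_i$; the task is to lift the family $(g_i K_i)_i$ to a single element of $G$. The central step is a Helly-type observation: since $G$ has rank $m$ and $\bigcap K_i = 1$, by Pontryagin duality the annihilator subgroups $K_i^{\perp} \leq \widehat{G}$ jointly generate $\widehat{G}$, and because $\widehat{G}$ is also of rank $m$ some $m$-element subfamily $\{K_{i_1}^{\perp}, \dots, K_{i_m}^{\perp}\}$ already contains a generating set, which dualizes back to $\bigcap_{j=1}^{m} K_{i_j} = 1$. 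Now for any additional orbit index $\ell$, I apply Lemma 2.1 to the $(m+1)$-tuple $(\alpha_{i_1}, \dots, \alpha_{i_m}, \alpha_\ell)$ to produce $g \in G$ with $g \equiv g_{i_j} \pmod{K_{i_j}}$ for each $j$ and $g \equiv g_\ell \pmod{K_\ell}$; the first $m$ congruences pin $g$ uniquely, so the same $g$ works for every $\ell$, giving $x = g \in G$.

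For the reverse direction that $G$ is not totally $m$-closed, I write $G = \mathbb{Z}_{d_1} \times \cdots \times \mathbb{Z}_{d_m}$ with $d_1 \mid \cdots \mid d_m$ and construct $\Omega = \bigsqcup_{i=1}^{m+1} \Omega_i$, where for $i \leq m$, $\Omega_i \cong \mathbb{Z}_{d_i}$ is a regular orbit of the $i$-th direct factor (other factors acting trivially), and $\Omega_{m+1} \cong \mathbb{Z}_{d_1}$ is a regular orbit of the cyclic quotient $G/\ker(\phi)$ attached to the surjection $\phi(a_1,\dots,a_m) = \sum_i (a_i \bmod d_1)$. Faithfulness is already forced by the first $m$ orbits. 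For any tuple of shifts $(b_1,\dots,b_{m+1})$ with $b_i \in \mathbb{Z}_{d_i}$ for $i \leq m$ and $b_{m+1} \in \mathbb{Z}_{d_1}$, let $x \in \mathrm{Sym}(\Omega)$ act as the $b_i$-shift on $\Omega_i$. A direct check via Lemma 2.1 shows $x \in G^{(m),\Omega}$: any $m$-tuple of points meets at most $m$ of the $m+1$ orbits, and the coordinate indexed by a missing orbit supplies a free parameter of $g \in G$ that absorbs whatever remaining compatibility equation appears. However, $x \in G$ forces the single linear relation $b_{m+1} = \sum_i (b_i \bmod d_1)$, which is easily violated, so $G^{(m),\Omega} \supsetneq G$.

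The principal obstacle is the Helly-type rank reduction in the forward direction: translating $(m+1)$-wise local compatibility into a single global element of $G$ requires the fact that some $m$-element subfamily of the kernels $K_i$ already intersects trivially, which rests on the rank-$m$ bound for $G$ together with the Pontryagin duality identification between subgroup intersections in $G$ and subgroup spans in $\widehat{G}$. Once this is in hand, $(m+1)$-closure does the rest, and the reverse direction reduces to a concrete construction whose faithfulness and $m$-closure membership are a routine verification.
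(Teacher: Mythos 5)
The paper offers no proof of this lemma at all --- it is quoted from the literature with a citation only --- so the only question is whether your argument stands on its own. Your reverse direction does: the faithful set with $m+1$ orbits (the first $m$ being regular orbits of the invariant factors, which already force faithfulness, plus one ``diagonal'' cyclic orbit imposing a single linear relation), together with the observation that any $m$-tuple of points misses some orbit whose coordinate absorbs the one remaining compatibility equation, is exactly the standard construction and checks out.

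The forward direction, however, has a genuine gap. The Helly-type claim that some $m$-element subfamily of the kernels $K_i$ already intersects trivially is false: take $G=\mathbb{Z}_6$, so $m=n(G)=1$, with $K_1=\langle 2\rangle\cong\mathbb{Z}_3$ and $K_2=\langle 3\rangle\cong\mathbb{Z}_2$; then $K_1\cap K_2=1$ but neither kernel is trivial, so no single orbit pins $x$ down to a unique element of $G$. The dual statement you invoke --- that subgroups generating a rank-$m$ group contain an $m$-element generating subfamily --- fails on the same example ($\mathbb{Z}_2$ and $\mathbb{Z}_3$ generate $\mathbb{Z}_6$, neither alone does). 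It is valid for $p$-groups via the Frattini quotient, but $n(G)$ is the \emph{maximum} of the $p$-ranks over primes dividing $|G|$, and distinct primes contribute independently, so no subfamily of bounded size need suffice (in $\mathbb{Z}_{p_1\cdots p_r}$ with $K_i$ the index-$p_i$ subgroup, all $r$ kernels are needed while $m=1$). What the argument actually requires is the weaker but genuinely nontrivial Helly property that is the real content of the theorem: if every $m+1$ of the coset conditions $g\equiv g_i \pmod{K_i}$ are simultaneously solvable (which Lemma 2.1 supplies), then all of them are. This does not follow from ``pinning $g$ by $m$ orbits''; it must be proved, e.g.\ prime by prime --- for each $p$ your Frattini argument does yield at most $\mathrm{rank}(G_p)\le m$ orbits whose $p$-parts pin the $p$-component $g_p$ uniquely, one then applies Lemma 2.1 to those orbits together with each further point $\alpha_\ell$, and finally glues the components $g_p$ over the primes. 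As written, the proof of the upper bound does not go through.
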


\begin{lem} A finite nilpotent group is totally 2-closed if and only if it is cyclic or a direct
product of a generalized quaternion group with a cyclic group of odd order.
\end{lem}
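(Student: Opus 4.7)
The plan is to prove the two directions separately, in each case reducing to the Sylow $p$-subgroups of $G$ via Theorem~3.1 and then exploiting the structural obstructions supplied by Propositions 3.2, 3.3 and Lemma~3.4. Write $G=\prod_{p}P_p$ for the Sylow decomposition of the nilpotent group $G$.

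For the forward direction, suppose $G$ is totally $2$-closed. The first step is to show that each Sylow subgroup $P_p$ is itself totally $2$-closed: given a faithful $P_p$-set $\Delta$, form the faithful $G$-set $\Omega=\Delta\times\prod_{q\neq p}Q_q$ in which each $Q_q$ acts regularly on itself and $P_p$ acts trivially on each $Q_q$-coordinate; then Theorem~3.1 together with the hypothesis $G^{(2),\Omega}=G$ forces $P_p^{(2),\Omega}=P_p$, and restricting to any single slice $\Delta\times\{h\}$ yields $P_p^{(2),\Delta}=P_p$. It therefore suffices to classify the totally $2$-closed $p$-groups. For $p$ odd and $P$ non-cyclic I would split into cases according to $Z(P)$: if $Z(P)$ is non-cyclic then $n(Z(P))\geq 2$, so Lemma~3.4 shows $Z(P)$ is not totally $2$-closed, contradicting Proposition~3.3; if $Z(P)$ is cyclic, let $\langle a\rangle$ be its unique subgroup of order $p$, choose an element $c$ of order $p$ in $\Omega_1(P)\setminus Z(P)$ (available since $P$ is non-cyclic), and locate inside the characteristic subgroup $\Omega_1(P)$ a normal subgroup $H\cong\mathbb{Z}_p\times\mathbb{Z}_p$ of $P$ containing $a$ with $|H\cap Z(P)|=p$; Proposition~3.2 then yields a contradiction. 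For $p=2$ the same cases apply unless $P$ contains no elementary abelian subgroup of order $4$, in which case the classical theorem on $2$-groups with a unique involution identifies $P$ as a generalized quaternion group.

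For the converse, cyclic groups follow at once from Lemma~3.4 with $n(G)=1$. For $G=Q\times C$ with $Q$ generalized quaternion and $C$ cyclic of odd order, the backward Sylow reduction is immediate: for any faithful $G$-set $\Omega$ each Sylow subgroup of $G$ acts faithfully, so once $Q$ and $C$ are each known to be totally $2$-closed, Theorem~3.1 gives $G^{(2),\Omega}=Q^{(2),\Omega}\times C^{(2),\Omega}=Q\times C=G$. Hence the main obstacle reduces to showing that $Q=Q_{2^n}$ is totally $2$-closed. Given any faithful $Q$-set $\Omega$ and any $x\in\mathrm{Sym}(\Omega)$ preserving every $Q$-orbit on $\Omega^2$, the structural fact to exploit is that $Q$ has a unique involution $z$, which is central and lies in every non-trivial subgroup of $Q$; hence every point stabilizer in $Q$ is either trivial or contains $z$, and the orbit-stabilizer structure of $Q$ is very rigid. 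I expect this to be the main technical difficulty: one performs an orbit-by-orbit analysis, using Lemma~2.1 to realize $x$ locally on each $Q$-orbit as an element of $Q$, and then uses the uniqueness of $z$ to glue these local realizations into a single global element $g\in Q$ with $x=g$.
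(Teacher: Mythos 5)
The paper offers no proof of this statement at all: Lemma 3.5 is imported verbatim from \cite[Theorem A]{4}, so your proposal is being compared against an external argument rather than anything in the text. Your forward direction for odd $p$ is sound and in fact mirrors what the paper does inside Theorem 3.6 (centre non-cyclic: Lemma 3.4 plus Proposition 3.3; centre cyclic: a normal subgroup of type $(p,p)$ meeting the centre in order $p$, then Proposition 3.2), and your Sylow reduction via Theorem 3.1 is fine. But the attempt has two genuine gaps.

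First, at $p=2$ your dichotomy is false. Proposition 3.2 requires $H\cong\mathbb{Z}_2\times\mathbb{Z}_2$ to be \emph{normal} in $P$, and ``$P$ contains an elementary abelian subgroup of order $4$'' does not supply a normal one: the dihedral and semidihedral $2$-groups of order at least $16$ contain Klein four-subgroups but no normal Klein four-subgroup (their unique normal subgroup of order $4$ is cyclic), they have more than one involution, and their centre is cyclic of order $2$, so they escape both branches of your case division and are not eliminated by Lemma 3.4 or Proposition 3.3 either. A separate argument is needed to show these groups are not totally $2$-closed. Second, the converse for $Q=Q_{2^n}$ is a statement of intent, not a proof. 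The step ``glue these local realizations into a single global element'' is precisely the hard content of the Abdollahi--Arezoomand theorem: an element of $Q^{(2),\Omega}$ is only guaranteed to agree with some a priori orbit-dependent element of $Q$ on each pair of points, and forcing these choices to be consistent requires a genuine analysis of pairs drawn from two different $Q$-orbits together with the subgroup lattice of $Q$; the uniqueness of the central involution alone does not produce the gluing. As written, the backward direction for generalized quaternion groups is asserted rather than established.
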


\medskip
Now, let us start proving Theorem 1.1. When $G$ is a finite $p$-group, this main theorem holds from the following theorem.
\medskip

\begin{theorem} Let $G$ be a finite $p$-group with $p\neq 2$. Then $G$ is a totally $k$-closed group if and only if $G$ is an abelian $p$-group with $n(G)\leq k-1$ or cyclic.
\end{theorem}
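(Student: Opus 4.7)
The plan is to prove the two directions separately, with the bulk of the work in the non-abelian case of the reverse direction.

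For the forward direction, suppose $G$ is cyclic or abelian with $n(G)\le k-1$. Either way $G$ is abelian with $n(G)\le k-1$ (noting $n(G)=1$ when $G$ is cyclic). By Lemma 3.4, $G$ is totally $(n(G)+1)$-closed, and since $k\ge n(G)+1$, $G$ is totally $k$-closed, as required.

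For the reverse direction, suppose $G$ is totally $k$-closed. If $G$ is abelian, Lemma 3.4 directly gives $n(G)\le k-1$ (otherwise $G$ would fail to be totally $n(G)$-closed), so it remains to rule out the non-abelian case. Assume for contradiction that $G$ is non-abelian; the strategy is to exhibit a normal subgroup $H\le G$ with $H\cong\mathbb{Z}_p\times\mathbb{Z}_p$ and $|H\cap Z(G)|=p$, so that Proposition 3.2 contradicts totally-$k$-closedness.

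To construct $H$, I plan to work inside the second center $Z_2(G)$, which strictly contains $Z(G)$ since $G$ is non-abelian. The identity $[Z_2(G),Z_2(G)]\le[Z_2(G),G]\le Z(G)$ shows $Z_2(G)$ has nilpotency class at most $2$, and for $p$ odd it is therefore a regular $p$-group, so $\Omega_1(Z_2(G))$ is a subgroup. First I would lift a nontrivial order-$p$ element of $Z_2(G)/Z(G)$ (which exists because $G$ is non-abelian) via regularity to an element $y\in Z_2(G)\setminus Z(G)$ with $y^p=1$. Next, take a $G$-normal subgroup $M\le Z_2(G)$ minimal among those not contained in $Z(G)$, and set $M_0=M\cap Z(G)$. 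Because the only simple $\mathbb{F}_p G$-module for a $p$-group $G$ is the trivial module, $G$ acts trivially on $M/M_0$; minimality then forces $|M/M_0|=p$, so $M=\langle y\rangle M_0$ is abelian. The commutator map $g\mapsto[g,y]$ is then a homomorphism $G\to Z(G)$ whose image $[G,y]$ lies in $M_0$. A further minimality argument (applied to the $G$-normal subgroup $\langle y\rangle[G,y]$) combined with Proposition 3.3---which forces $Z(G)$ to be totally $k$-closed and through Lemma 3.4 restricts its invariant-factor structure---reduces $[G,y]$ to a cyclic subgroup $\langle a\rangle$ of order $p$. Setting $H=\langle y,a\rangle$ yields the desired normal $\mathbb{Z}_p\times\mathbb{Z}_p$ with $H\cap Z(G)=\langle a\rangle$ of order $p$, and Proposition 3.2 delivers the contradiction.

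The main obstacle is the last reduction, namely arranging $|[G,y]|=p$: a priori the image $[G,y]$ can have higher rank inside $Z(G)$, and a delicate interplay of the minimality of $M$, the $p$-odd regularity of $Z_2(G)$, and the totally-$k$-closedness of $Z(G)$ furnished by Proposition 3.3 is needed to collapse it to a single cyclic order-$p$ subgroup. Once $H$ is in hand, Proposition 3.2 closes the argument, and we conclude that $G$ must be abelian, completing the proof.
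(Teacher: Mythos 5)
Your forward direction (abelian with $n(G)\le k-1$, or cyclic, implies totally $k$-closed) is fine and coincides with the paper's use of Lemma 3.4. The gap is in the non-abelian case of the converse: your entire strategy rests on producing, in \emph{every} non-abelian $p$-group of odd order, a normal subgroup $H\cong\mathbb{Z}_p\times\mathbb{Z}_p$ with $|H\cap Z(G)|=p$, and no such subgroup need exist. The step that fails first is the lift: regularity of $Z_2(G)$ tells you $\Omega_1(Z_2(G))$ is a subgroup of exponent $p$, but it does \emph{not} let you lift an order-$p$ element of $Z_2(G)/Z(G)$ to an element $y\in Z_2(G)\setminus Z(G)$ with $y^p=1$. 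Concretely, take $G=\langle a,b : a^{p^2}=b^{p^2}=1,\ a^b=a^{1+p}\rangle$ with $p$ odd, a non-abelian group of order $p^4$ and class $2$ (so $Z_2(G)=G$ is regular). Since $[a,b]=a^p$ is central of order $p$ and $\binom{p}{2}\equiv 0\pmod p$, one computes $(a^ib^j)^p=a^{ip}b^{jp}$, hence $\Omega_1(G)=\langle a^p,b^p\rangle=Z(G)$: every element of order $p$ is central. Consequently every subgroup isomorphic to $\mathbb{Z}_p\times\mathbb{Z}_p$, normal or not, lies inside $Z(G)$ and meets it in order $p^2$, so Proposition 3.2 can never be invoked for this group, and the later stages of your construction (the minimal normal $M$, collapsing $[G,y]$) cannot repair this since they all presuppose the non-central element $y$ of order $p$.

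This is precisely the dividing line in the paper's own proof. When $Z(G)$ is cyclic, the required $H$ of type $(p,p)$ with $|H\cap Z(G)|=p$ does exist and Proposition 3.2 applies (the paper's Case 1, which your argument essentially reproduces). When $Z(G)$ is non-cyclic, the paper must allow for all type-$(p,p)$ normal subgroups being central and switches to an entirely different mechanism: induction on $|G|$ along a normal series $1\unlhd H_1\unlhd\cdots\unlhd H_n=G$, an argument that each proper term is itself totally $k$-closed (hence, by the inductive hypothesis, abelian with at most $k-1$ invariant factors), a reduction to $G$ minimal non-abelian, and an appeal to R\'edei's classification. You would need an argument of this kind, or some genuinely new idea, to dispose of groups such as the example above. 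Note also that even in the cases where your $y$ does exist, you explicitly leave the reduction of $[G,y]$ to a cyclic group of order $p$ as an unresolved ``obstacle,'' so the proposal is incomplete there as well.
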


\begin{proof} We first prove the "only if" direction. If $G$ is a totally $k$-closed group, then $Z(G)$ is totally $k$-closed by Proposition 3.3. Then we easily deduce the results by Lemma 3.4 whenever $G$ is abelian , thus we may assume that $G$ is non-abelian. Using induction on $|G|$, then $Z(G)$ is an abelian $p$-group with $n(G)\leq k-1$ or cyclic by the inductive hypothesis. 

\medskip

We next consider the two cases for $Z(G)$.
\medskip

{\bf Case 1.}\, Suppose that $Z(G)$ is cyclic. Then $G$ has at least one abelian normal subgroup of type $(p,p)$. We take $H$ to be an abelian normal subgroup of type $(p,p)$ of $G$. In particular, we may assume that $|Z(G)\cap H|=p$. Otherwise, we can choose $N\leq H$ such that $$N\unlhd G~ ~\mathrm{and} ~~|N|=p.$$  We can now substitute $\langle a\rangle\times N$ for $H$, where $a\in Z(G)$ and $o(a)=p$. Thus $G$ is not $k$-closed by Proposition 3.2, a contradiction.
\medskip

{\bf Case 2.}\, Suppose that $Z(G)$ is not cyclic. Similarly, we still have that there exists an abelian normal subgroups of type $(p,p)$ of $G$. Moreover, we may assume that all abelian normal subgroups of type $(p,p)$ of $G$ are contained in $Z(G)$. Let $$1\unlhd H_1\unlhd H_2...\unlhd  H_{n-1}\unlhd H_{n}=G$$ be a normal series for which each $|H_{i+1}/H_{i}|\leq p$ and $H_2$ to be of type $(p, p)$ here. Note that $H_{3}$ is abelian because $|H_{3}/H_2|\leq p$ and $H_2\leq Z(G)$. Assume that $\Omega$ is a set such that $G$ acts on $\Omega$ faithfully, then $H_{3}^{(k),\Omega}$ is abelian by (\cite[Exercise 5.29]{W49}).

\medskip
If $H_{3}^{(k),\Omega}> H_{3}$.  Then, by (\cite[Lemma 1.3]{M14}), $$x^{-1}H_{3}^{(k),\Omega}x=(x^{-1}H_{3}x)^{(k),\Omega}=H_{3}^{(k),\Omega}$$ for any $x\in G$.
Hence we may assume that there exists $H_{3}^{(k),\Omega}=H_j$ for some $4 \leq j\leq n$. Obviously, $H_j=H_{j}^{(k),\Omega}$. We are done from Lemma 3.5 whenever $j=n$.

Now, assumption that $$H_{3}^{(k),\Omega}= H_{3}<G ~~\mathrm{or}~~ H_{3}<H_{3}^{(k),\Omega}=H_j<G.$$ Let $\Delta$ be a set so that $H_{3}$ acts faithfully. Let $\Omega'=\Delta\times G/H_{3}$, and so $G$ acts faithfully on $\Omega'$. 
Similarly, we may assume that $H_{3}^{(k),\Omega'}<G .$ Let $\{g_1, g_2,...,g_m\}$ be the set of right transversal of $H_3$ in $G$, where $g_1=1$. Write $$H:=H_{3}~, \Delta_{g_{i}H}=\{(\alpha,g_{i}H) ~:~ \alpha\in \Delta\},$$ and we denote $$\sigma_i: H \rightarrow H^{(k),\Delta_{g_{i}H}},\quad ~~h\mapsto g_{i}h{g_{i}}^{-1} .$$
Then $(H,\Delta)$ is permutation isomorphic to $(\sigma_i(H),\Delta_{g_{i}H})$.
Thus $(\sigma_1(H),\Omega')$ permutation isomorphism $(H,\Omega')$ and ${\sigma_1(H)}^{(k),\Omega'}=\sigma_1(H)$ by $H^{(k),\Omega'}=H$.

 Let $$\rho_i: H^{(k),\Delta_{H}} \rightarrow H^{(k),\Delta_{g_{i}H}},\quad by \quad(\alpha, {g_{i}H})^h= (\alpha^{\rho_i(h)}, {g_{i}H}),~~h\in H^{(k),\Delta_{H}}.$$ It is easy to see that $(H^{(k),\Delta_{H}},\Delta_{H})$ is permutation isomorphic to $(H^{(k),\Delta_{g_{i}H}},\Delta_{g_{i}H})$. It follows that $ H^{(k),\Delta_{H}}$ acts faithfully on $\Omega'$.
 Set $H':=\sigma_1(H)$. For $h\in {H'^{(k),\Delta_{H}}}$, then there exists $x\in  H'$ so that $$((\alpha_1,H),(\alpha_2,H),...,(\alpha_k, H))^h=((\alpha_1,H),(\alpha_2,H),...,(\alpha_k, H))^x$$ for all $\alpha_1,\alpha_2,...,\alpha_k\in \Delta$. So, for all $(\alpha_1, g_{i_1}H),(\alpha_2, g_{i_2}H),...,(\alpha_k, g_{i_k}H)\in \Omega'$, $$
\begin{aligned}
((\alpha_1, g_{i_1}H),...,(\alpha_k, g_{i_k}H))^{h} &=((\alpha_1^{\rho_{i_1}(h)}, g_{i_1}H),(\alpha_2^{\rho_{i_2}(h)}, g_{i_2}H),...,(\alpha_k^{\rho_{i_k}(h)}, g_{i_k}H)) \\
&=((\alpha_1^{\rho_{i_1}(x)}, g_{i_1}H),(\alpha_2^{\rho_{i_2}(x)}, g_{i_2}H),...,(\alpha_k^{\rho_{i_k}(x)}, g_{i_k}H))\\
&=((\alpha_1, g_{i_1}H),...,(\alpha_k, g_{i_k}H))^{x} .
\end{aligned}
$$ This implies that $h\in H'^{(k), \Omega'}=H'$. Hence ${H'^{(k),\Delta_{H}}}=H'$, and so $H^{(k),\Delta}=H$, which proves that $H$ is $k$-closed over $\Delta$ and so $H_{3}$ is totally $k$-closed. 
 \medskip

 For $3<i<n$, let $\Delta'$ be any set on which $H_{i}$ acts faithfully an $\Omega''=\Delta'\times G/H_{i}$. Then we may assume that $$H_{i}^{(k),\Omega''}=G~ ~\mathrm{or} ~~H_{i}^{(k),\Omega''}=H_{i}<G~~ \mathrm{or} ~~H_{i}^{(k),\Omega''}=H_{j'}<G$$ for some $i\leq j'\leq n$. 
 
  \medskip
{\bf subcase 2.1.}\, If $H_{i}^{(k),\Omega''}=H_{i}<G~\mathrm{or}~H_{i}^{(k),\Omega''}=H_{j'}<G$. We can conclude that $H_{i}$ is totally $k$-closed from the same reasoning above.
  \medskip

{\bf subcase 2.2.}\, If $H_{i}^{(k),\Omega''}=G$. Then $G$ acts trivial on $G/H_{i}$
 because $H_{i}^{(k),\Omega''}$ acts trivial on $G/H_{i}$.

\medskip 
By subcase 2.1 and subcase 2.2, it implies that $H_{i}$ is totally $k$-closed in any case. Therefore it suffices to assume that $H_{n-1}$ is totally $k$-closed. It follows that $H_{n-1}$ is an abelian $p$-group with $n(H_{n-1})\leq k-1$ by inductive hypothesis, which yields that $H$ is abelian $p$-group with $n(H)\leq k-1$ for all proper subgroup of $G$. Then  $$G=\langle a,b~:~a^{p^n}=b^{p^m}=1, a^b=a^{1+p^{n-1}},n\geq 2\rangle$$ or $$G=\langle a,b,c~:~a^{p^n}=b^{p^m}=c^p=1, [a,b]=c,[c,a]=[c,b]=1\rangle$$ by L. R$\acute{e}$dei in \cite{10}, which is a contradiction with $Z(G)$ being non-cyclic.
\medskip

The converse direction follows immediately from Lemma 3.4 and Lemma 3.5.

\end{proof}
\medskip

 \noindent{\bf Proof of Theorem 1.1.}\, By Theorem 3.1, we have $$G^{(k), \Omega}=\prod_{P \in \operatorname{Syl}(G)} P^{(k), \Omega}$$ and hence $G$ is totally $k$-closed if and only if $P$ is totally $k$-closed for all $P\in \mathrm{Syl}_p({G})$. Accordingly to Theorem 3.6 and  Lemma 3.4, this conclusion of Theorem 1.1 holds, as required.

\vskip 3mm
The proof of Theorem 1.1 is now complete.\\

$\mathbf{Acknowledgment}$
\medskip

 The authors have no conflict of interest to declare that are relevant to this article.


\begin{thebibliography}{99}

\bibitem{4} A. Abdollahi, M. Arezoomand, G. Tracey, \emph{On finite totally 2-closed groups}. (2020), arXiv:2001:09597.

\bibitem{M14}
A. Abdollahi and M. Arezoomand, \emph{Finite nilpotent groups that coincide
with their 2-closures in all of their faithful permutation representations},
J. Algebra Appl. $\mathbf{17(4)}$ 2018, 1850065.








\bibitem{3} G. Chen, I. Ponomarenko, \emph{Coherent Configurations}.
Central China Normal University Press, Wuhan, (2019).




\bibitem{5} D. Churikov, C. E. Praeger, \emph{Finite totally $k$-closed groups}, Tr. Inst. Mat. Mekh.
$\mathbf{27 (1)}$ (2021), 240-246.


\bibitem{6} D. Churikov D, I. Ponomarenko. On 2-closed abelian permutation groups[J]. Communications in Algebra, 2022, 50(4): 1792-1801.



\bibitem{P15}
J. D. Dixon and B. Mortimer, \emph{Permutation Groups}, Graduate Texts in Mathematics, 163,
Springer (1996).






\bibitem{10} L. R$\acute{e}$dei. \emph{Das schiefe Produkt in der Gruppentheorie}. commentarii mathematici helvetici, 1947, $\mathbf{20(1)}$ : 225-264.



\bibitem{W49}
H. Wielandt, \emph{Permutation groups through invariant relations and invariant functions}. Lecture Notes, Ohio State University, 1969. Also published in: Wielandt, Helmut, Mathematische Werke (Mathematical works) Vol. 1. Group theory, Walter der Gruyter ${\rm{ \& }}$ Co., Berlin, (1994), pp. 237-296.





\end{thebibliography}
\end{document}